\documentclass[12pt,leqno,amscd, amsfonts, amssymb,pstricks,verbatim]{amsart}
\usepackage{amsfonts}
\usepackage{mathrsfs}
\usepackage{amsmath}
\usepackage{amssymb}
\usepackage{hyperref}

\oddsidemargin -.1in \evensidemargin -.1in \textwidth 6.5in
\textheight 8.2in
\linespread{1.3}

\def\a{\alpha}

\def\b{\beta}

\def\Z{\mathbb{Z}}
\def\N{\mathbb{N}}

\def\C{\mathbb{C}}

\numberwithin{equation}{section}
\newtheorem{theo}{Theorem}[section]
\newtheorem{defi}[theo]{Definition}

\newtheorem{prop}[theo]{Proposition}

\newtheorem{rema}[theo]{Remark}
\newtheorem{case}{Case}

\newtheorem{subcase}{Subcase}

\allowdisplaybreaks

\begin{document}

\title[Non-weight modules over algebras related to the Virasoro algebra]{Non-weight modules over algebras related to the Virasoro algebra}

\author{Qiu-Fan Chen and Yu-Feng Yao}

\address{Department of Mathematics, Shanghai Maritime University,
 Shanghai, 201306, China.}\email{chenqf@shmtu.edu.cn}

\address{Department of Mathematics, Shanghai Maritime University,
 Shanghai, 201306, China.}\email{yfyao@shmtu.edu.cn}

\subjclass[2010]{17B10, 17B35, 17B65, 17B68}

\keywords{Virasoro algebra, loop-Virasoro algebra, Block type Lie algebra, non-weight module, simple module}

\thanks{This work is supported by National Natural Science Foundation of China (Grant Nos. 11771279, 11371278, 11431010, 11571008 and 11671138) and
Natural Science Foundation of Shanghai (Grant No. 16ZR1415000).}

\begin{abstract}
In this paper, we study a class of non-weight modules over two kinds of algebras related to the Virasoro algebra, i.e., the loop-Virasoro algebras $\mathfrak{L}$ and a class of Block type Lie algebras $\mathfrak{B(q)}$, where $q$ is a nonzero complex number. We determine those modules whose restriction to the Cartan subalgebra (modulo center) are free of rank one. We also provide a sufficient and necessary condition for such modules to be simple, and determine their isomorphism classes. Moreover,  we obtain the simplicity of modules over loop-Virasoro algebras by taking tensor products of some irreducible modules mentioned above with irreducible highest weight modules or Whittaker modules.
\end{abstract}

\maketitle

\section{Introduction}

Throughout the paper, we denote by $\C ,\,\Z,\,\C^*,\,\Z^*,\,\Z_+,\,\N$ the sets of complex numbers, integers, nonzero complex numbers, nonzero integers, nonnegative integers and positive integers, respectively.  $\C[t]$ is used to denote the polynomial algebra over $\C$. All vector spaces (resp. (Lie) algebras) are over $\C$. For a Lie algebra $\mathfrak{g}$, we use $U(\mathfrak{g})$ to denote the universal enveloping algebra of $\mathfrak{g}$.

The Virasoro algebra, denoted by $\mathfrak{V}$, is an infinite dimensional Lie algebra over $\C$ with basis $\{L_i, C\mid i\in\Z\}$ and defining relations
\begin{equation*}
\aligned
&[L_i,L_j]=(j-i)L_{i+j}+\delta_{i+j,0}\frac{i^3-i}{12}C,\,\,\,\forall\,i,j\in\mathbb{Z},\\
&[L_i,C]=0,\,\,\,\forall\,i\in\mathbb{Z},
\endaligned
\end{equation*}
which is the universal central extension of the so-called infinite dimensional Witt algebra of rank one. The Virasoro algebra is one of the most important
Lie algebras in both mathematics and mathematical physics (cf. \cite{KR}). The representation theory of the Virasoro algebra has been extensively studied.
In \cite{M}, O. Mathieu classified all simple Harish-Chandra modules,  which was conjectured by Kac \cite{K}. In recent years, many authors constructed various simple non-Harish-Chandra modules and simple non-weight modules(cf.~\cite{BM,LGZ,MW,MZ,TZ1,LLZ,LZ,TZ,CG2,LZ2,CH}). In particular, the authors in \cite{TZ} constructed  a class of $\mathfrak{V}$-modules $\Omega(\lambda, \a)(\lambda\in\C^*,\a\in\C)$ that are free of rank one when restricted to the Cartan subalgebra. Explicitly, the  $\mathfrak{V}$-module structure on $\Omega(\lambda, \a)=\C[t]$ is given by
\begin{equation}\label{intro}
\aligned
&L_i\cdot f(t)=\lambda^i(t-i\a)f(t-i),
\,\, \forall \, \, i\in\Z, \ f(t)\in\C[t],\\
&C\cdot f(t)=0, \forall \, f(t)\in\C[t].
\endaligned
\end{equation}
Actually, this class of modules were first introduced and studied in \cite{LZ} as quotient modules of fraction $\mathfrak{V}$-module. Recently, this kind of non-weight modules, which many authors call $U(\mathfrak{h})$-free modules, have been extensively studied.

The notation of $U(\mathfrak{h})$-free modules was first introduced by J. Nilsson \cite{N} for the simple Lie algebra $\mathfrak{sl_{n+1}}$. The idea originated in the attempt to understand whether the general setup for study of Whittaker modules proposed in \cite{BM} can be used to construct some explicit families of simple $\mathfrak{sl_{n+1}}$-modules. In this paper and a subsequent paper \cite{N1}, Nilsson showed that a finite dimensional simple Lie algebra has nontrivial $U(\mathfrak{h})$-free modules if and only if it is of type $A$ or $C$. Furthermore, the $U(\mathfrak{h})$-free modules of rank one for the Kac-Moody Lie algebras were determined in \cite{CTZ}. They proved that there are no nontrivial $U(\mathfrak{h})$-free modules of rank one for affine type and indefinite type. And the idea  was  exploited and generalized to consider modules over infinite  dimensional Lie algebras, such as the Witt algebras of all ranks \cite{TZ},  Heisenberg-Virasoro algebra and $W(2,2)$ algebra  \cite{CG},  simple finite-dimensional Lie superalgebras \cite{CZ}, the algebras $\mathfrak{V}ir(a,b)$  \cite{HCS}, the Lie algebras related to the Virasoro algebra \cite{CC} and so on. The aim of this paper is to classify such modules for the loop-Virasoro algebras and a class of Block type Lie algebras.

This paper is organized as follows. In Section 2, we construct a class of non-weight modules over the loop-Virasoro algebra. The simplicity and isomorphism classes of these modules are explicitly determined. Moreover, we obtain the simplicity of modules over the loop-Virasoro algebra by taking tensor products of some irreducible modules mentioned above with irreducible highest weight modules or Whittaker modules. Section 3 is devoted to classifying the modules whose restriction to the Cartan subalgebra are free of rank one over a class of Block type Lie algebras. We also provide a necessary and sufficient condition for such modules to be simple.

\section{Modules over the loop-Virasoro algebra}
The loop-Virasoro algebra $\mathfrak{L}$ is the Lie algebra that is the tensor product of the Virasoro algebra  $\mathfrak{V}$ and the Laurent polynomial algebra $\C[t^{\pm1}]$, i.e., $\mathfrak{L}=\mathfrak{V}\otimes \C[t^{\pm1}]$
with $\C$-basis $\{L_i\otimes t^j,C\otimes t^j\mid (i,j)\in\Z^2\}$ and defining relations
\begin{equation*}
\aligned
&[L_{i}\otimes t^{j},L_{k}\otimes t^{l}]=(k-i)L_{i+k}\otimes t^{j+l}+\delta_{i+k,0}\frac{i^3-i}{12}(C\otimes t^{j+l}),\\
&[L_i\otimes t^j,C\otimes t^{l}]=0.
\endaligned
\end{equation*}
We see that $\mathfrak{L}$ has a copy of Virasoro algebra which is $\mathfrak{V}\otimes1$. For convenience, we simply write $L_{i,j}=L_i\otimes t^j$ and $C_i=C\otimes t^i$. It is obvious that $\mathfrak{L}$ has the natural $\Z$-grading \begin{equation*}\mathfrak{L}=\oplus_{i\in\Z}\mathfrak{L}_i,\ \ \ \mathfrak{L}_i={\rm span_{\C}}\{L_{i,j},\delta_{i,0}C_j\mid j\in\Z\}.\end{equation*} Note that  $\mathfrak{L}_0$ is an infinite dimensional abelian subalgebra of $\mathfrak{L}$, and that $C\otimes \C[t^{\pm1}]$ is the center of $\mathfrak{L}$. The Cartan subalgebra  (modulo center) of $\mathfrak{L}$ is spanned by $L_{0,0}$. Various classes of representation of the loop-Virasoro algebra were studied and classified in \cite{GLZ,LG}.
In this section, we will determine the $\mathfrak{L}$-modules which are free of rank $1$ when regarded as $\C L_{0,0}$-modules.

\begin{defi}\label{defi2.1}\rm  For $\lambda,\mu\in\C^*,\,\,\a\in\C$, define the action of $\mathfrak{L}$  on $\Omega(\lambda, \mu,\a):=\C[t]$ as follows:
\begin{equation}\label{L-action}
\aligned
&L_{i,j}\cdot f(t)=\lambda^{i-j}\mu^j(t-i\a)f(t-i),\\
&C_i\cdot f(t)=0,
\endaligned
\end{equation}where $f(t)\in\C[t], (i,j)\in\Z^2$.
\end{defi}
\begin{prop}
$\Omega(\lambda, \mu,\a)$ is an $\mathfrak{L}$-module under the action given in Definition \ref{defi2.1}.
\end{prop}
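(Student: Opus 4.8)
The plan is to verify directly that the operators defined in \eqref{L-action} satisfy the defining relations of $\mathfrak{L}$. First I would dispose of the easy part: each $C_i$ acts as the zero operator, so the relations $[L_{i,j},C_l]=0$ hold trivially and the fact that $C\otimes\C[t^{\pm1}]$ acts by zero is consistent with it being the center. Moreover $f(t)\mapsto(t-i\a)f(t-i)$ maps $\C[t]$ into $\C[t]$ and is $\C$-linear, so each $L_{i,j}$ is a well-defined linear operator on $\Omega(\lambda,\mu,\a)$. Hence everything reduces to checking the single family of identities
\[
[L_{i,j},L_{k,l}]\cdot f(t)=(k-i)\,L_{i+k,j+l}\cdot f(t),\qquad (i,j),(k,l)\in\Z^2,\ f(t)\in\C[t],
\]
where the term $\delta_{i+k,0}\frac{i^3-i}{12}C_{j+l}$ coming from the commutator in $\mathfrak{L}$ has already been omitted on the right, since $C_{j+l}$ acts as zero.

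Second, I would compute the composition $L_{i,j}\cdot\bigl(L_{k,l}\cdot f(t)\bigr)$ by applying \eqref{L-action} twice, obtaining $\lambda^{i+k-j-l}\mu^{j+l}(t-i\a)(t-i-k\a)f(t-i-k)$, and likewise $L_{k,l}\cdot\bigl(L_{i,j}\cdot f(t)\bigr)=\lambda^{i+k-j-l}\mu^{j+l}(t-k\a)(t-k-i\a)f(t-i-k)$. The scalar $\lambda^{i+k-j-l}\mu^{j+l}$ and the shifted argument $f(t-i-k)$ are symmetric under interchange of the two index pairs, so they factor out of the commutator, and the whole problem collapses to the elementary polynomial identity
\[
(t-i\a)(t-i-k\a)-(t-k\a)(t-k-i\a)=(k-i)\bigl(t-(i+k)\a\bigr),
\]
which follows by expanding: the $t^2$ and $\a^2$ contributions cancel, leaving $(k-i)t+(i^2-k^2)\a$.

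Finally, combining these gives $[L_{i,j},L_{k,l}]\cdot f(t)=(k-i)\lambda^{i+k-j-l}\mu^{j+l}\bigl(t-(i+k)\a\bigr)f(t-(i+k))$, which is precisely $(k-i)L_{i+k,j+l}\cdot f(t)$ by \eqref{L-action}; I would note that this remains valid in the exceptional case $i+k=0$, since the central term on the right contributes nothing. I do not expect a genuine obstacle here, as the argument is a finite verification; the step requiring the most care is the bookkeeping of the exponents of $\lambda$ and $\mu$ in the double composition, together with the observation that these exponents and the shift of $f$ are symmetric under exchange of the two indices, which is exactly what makes the commutator reduce to the scalar polynomial identity above.
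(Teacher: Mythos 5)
Your verification is correct and follows essentially the same route as the paper: a direct computation of $L_{i,j}\cdot L_{k,l}\cdot f(t)$, reduction to the polynomial identity $(t-i\a)(t-i-k\a)-(t-k\a)(t-k-i\a)=(k-i)\bigl(t-(i+k)\a\bigr)$, and the trivial observation that the central elements act by zero. No gaps.
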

\begin{proof}
According to the above definition, for any $f(t)\in\C[t]$ and $(i,j,k,l)\in\Z^4$, we have the following straightforward computation by (\ref{L-action})
\begin{eqnarray*}\label{a4}
\!\!\!\!\!\!
L_{i,j}\cdot L_{k,l}\cdot f(t)&\!\!\!=\!\!\!&
\lambda^{k-l}\mu^lL_{i,j}\cdot\big((t-k\alpha) f(t-k)\big)\nonumber\\
&\!\!\!=\!\!\!&
\lambda^{i+k-j-l}\mu^{j+l}(t-i\alpha)(t-k\alpha-i)f(t-i-k),
\end{eqnarray*}
from which we get
\begin{eqnarray*}
[L_{i,j},L_{k,l}]\cdot f(t)&=&\big((k-i)L_{i+k,j+l}+\delta_{i+k,0}\frac{i^3-i}{12}C_{j+l}\big)\cdot f(t)\nonumber\\
&=&(k-i)L_{i+k,j+l}\cdot f(t)\nonumber\\
&=&
\lambda^{i+k-j-l}\mu^{j+l}(k-i)(t-i\alpha-k\alpha)f(t-i-k)\nonumber\\
&=&\lambda^{i+k-j-l}\mu^{j+l}(t-i\alpha)(t-k\alpha-i)f(t-i-k)\nonumber\\
&&-\lambda^{i+k-j-l}\mu^{j+l}(t-k\alpha)(t-i\alpha-k)f(t-i-k)\nonumber\\
&=&L_{i,j}\cdot L_{k,l}\cdot f(t)-L_{k,l}\cdot L_{i,j}\cdot f(t).\nonumber\\
\end{eqnarray*}
Moreover, it directly follows from (\ref{L-action}) that
$$[L_{i,j}, C_k]\cdot f(t)=L_{i,j}\cdot C_k\cdot f(t)-C_k\cdot L_{i,j}\cdot f(t)=0, \,\,\forall\,(i,j,k)\in\mathbb{Z}^3, f(t)\in\mathbb{C}[t].$$
We completes the proof.
\end{proof}
The following result determines the isomorphism classes of the $\mathfrak{L}$-modules $\Omega(\lambda,\mu,\a)$.
\begin{prop} Let $\lambda,\lambda^{\prime},\mu, \mu^{\prime}\in\mathbb{C}^*,\a, \a^{\prime}\in\mathbb{C}$. Then the $\mathfrak{L}$-modules $\Omega(\lambda,\mu,\a)$ and $\Omega(\lambda^{\prime},\mu^{\prime},\a^{\prime})$ are isomorphic if and only if  $(\lambda,\mu,\a)=(\lambda^{\prime},\mu^{\prime},\a^{\prime}).$
\end{prop}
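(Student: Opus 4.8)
The plan is to reduce everything to the action of $L_{0,0}$. The forward direction is trivial, so I would take an $\mathfrak{L}$-module isomorphism $\phi\colon\Omega(\lambda,\mu,\a)\to\Omega(\lambda',\mu',\a')$ — both underlying spaces being $\C[t]$ — and first record that, by \eqref{L-action} with $i=j=0$, the element $L_{0,0}$ acts as multiplication by $t$ on \emph{both} modules. In particular each $\Omega(\lambda,\mu,\a)$ is free of rank one over $\C[L_{0,0}]=U(\C L_{0,0})$ with generator $1$, and $\phi$ commutes with multiplication by $t$.

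Next I would set $g:=\phi(1)\in\C[t]$ and use $L_{0,0}$-linearity: for every $f\in\C[t]$,
\[
\phi(f)=\phi\big(f(L_{0,0})\cdot 1\big)=f(L_{0,0})\cdot g=fg,
\]
so $\phi$ is simply multiplication by the fixed polynomial $g$. Since $\phi$ is bijective, $g$ must be a nonzero constant $c\in\C^*$, i.e. $\phi=c\cdot\mathrm{id}$.

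Finally I would feed this back into the intertwining relation for a general generator $L_{i,j}$. Applying $\phi$ to $L_{i,j}\cdot 1=\lambda^{i-j}\mu^{j}(t-i\a)$ and equating with $L_{i,j}\cdot\phi(1)=c\,(\lambda')^{i-j}(\mu')^{j}(t-i\a')$ gives the polynomial identity
\[
\lambda^{i-j}\mu^{j}(t-i\a)=(\lambda')^{i-j}(\mu')^{j}(t-i\a')\quad\text{in }\C[t],\qquad\forall\,(i,j)\in\Z^2.
\]
Reading off the coefficient of $t$ and specializing $(i,j)=(1,0)$ yields $\lambda=\lambda'$, then $(i,j)=(1,1)$ yields $\mu=\mu'$, and comparing constant terms at $i=1$ yields $\a=\a'$. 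The argument is entirely routine; the only idea needed is the opening observation that $L_{0,0}$ acts as multiplication by $t$, which pins $\phi$ down to a scalar, so there is no genuine obstacle.
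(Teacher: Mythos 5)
Your proof is correct and follows essentially the same route as the paper: both arguments use the $U(\C L_{0,0})$-freeness to show the isomorphism is multiplication by a nonzero scalar, and then compare the action of the generators on $1$. The only difference is that the paper quotes the Virasoro-module classification of \cite{LZ} to get $(\lambda,\a)=(\lambda',\a')$ and only checks $\mu=\mu'$ by hand, whereas you rederive all three equalities directly from the identity $\lambda^{i-j}\mu^{j}(t-i\a)=(\lambda')^{i-j}(\mu')^{j}(t-i\a')$ at $(i,j)=(1,0)$ and $(1,1)$, which makes the argument self-contained.
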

\begin{proof} It suffices to show the necessary part. Suppose $\varphi:\Omega(\lambda,\mu,\a)\to\Omega(\lambda^{\prime},\mu^{\prime},\a^{\prime})$ is an  isomorphism of $\mathfrak{L}$-modules with the inverse $\varphi^{-1}$. Regard $\Omega(\lambda,\mu,\a)$ and $\Omega(\lambda^{\prime},\mu^{\prime},\a^{\prime})$ as $\mathfrak{V}$-modules, we get $(\lambda,\a)=(\lambda^{\prime},\a^{\prime})$ by \cite{LZ}.  To complete the proof, we only need to show $\mu=\mu^{\prime}$. For that, let $f(t)\in\C[t]$,  we have \begin{equation*}\varphi(f(t))=\varphi(f(L_{0,0})\cdot1)=f(L_{0,0})\cdot\varphi(1)=f(t)\varphi(1).\end{equation*}
Similarly,\begin{equation*}\varphi^{-1}(f(t))= f(t)\varphi^{-1}(1).\end{equation*}
Hence, \begin{equation*}1=\varphi^{-1}(\varphi(1))=\varphi(1)\varphi^{-1}(1),\end{equation*}
which implies $\varphi(1)\in\C^*$. Combining this with $\a=\a^{\prime}$ and
\begin{equation*}\mu(t-\a)\varphi(1)=\varphi(L_{1,1}\cdot1)=L_{1,1}\cdot\varphi(1)=\mu^{\prime}(t-\a^{\prime})\varphi(1),\end{equation*}
we konw that $\mu=\mu^{\prime}$.
\end{proof}

We are now in the position to present the following main result of this section.
\begin{theo}\label{theo123456} Let $M$ be an $\mathfrak{L}$-module such that it is free of rank one as a $U(\C L_{0,0})$-module. Then $M\cong\Omega(\lambda, \mu, \a)$ for some $\lambda,\mu\in\C^*$ and $\a\in\C$. Moreover, $M$ is simple if and only if $M\cong\Omega(\lambda, \mu,\a)$ for some $\lambda,\mu,\a\in\C^*$.
\end{theo}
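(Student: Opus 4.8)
The plan is to prove the two assertions in turn: first that every $U(\C L_{0,0})$-free module of rank one is one of the $\Omega(\lambda,\mu,\a)$, and then to pin down simplicity. For the first part, let $M=U(\C L_{0,0})v=\C[L_{0,0}]v$ for a generator $v$, and identify $M$ with $\C[t]$ by sending $g(L_{0,0})v$ to $g(t)$, so that $L_{0,0}$ acts as multiplication by $t$. The key structural input is that $\mathfrak L$ contains the copy $\mathfrak V\otimes 1$ of the Virasoro algebra, and the restriction of $M$ to this subalgebra is exactly a $U(\mathfrak h)$-free module of rank one; by the classification in \cite{TZ} (or \cite{LZ}), this forces $L_{i,0}\cdot f(t)=\lambda^i(t-i\a)f(t-i)$ for some $\lambda\in\C^*$, $\a\in\C$, and $C_0\cdot f(t)=0$. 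So the work is to determine the action of the remaining $L_{i,j}$ with $j\neq 0$ and of the central elements $C_j$.

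For this I would proceed degree by degree in the $\Z$-grading. First, since each $C_j$ is central and $M$ is generated by $v$ over the abelian $\C L_{0,0}$, one checks $C_j\cdot v$ commutes appropriately and must in fact act as zero: apply $[L_{i,0},C_j]=0$ to $v$ to see that $C_j\cdot v\in\C[t]$ is fixed by the shift-and-multiply operators $L_{i,0}$, which forces $C_j\cdot v=0$, hence $C_j$ acts as $0$. Next, to get the $L_{i,j}$, write $L_{1,j}\cdot v=:h_j(t)\in\C[t]$ and use the bracket relations with $L_{i,0}$ to set up recursions. Concretely, $[L_{0,0},L_{1,j}]=L_{1,j}$ gives, after applying to $v$, a functional equation of the form $t\,h_j(t)-L_{1,j}\cdot(t v)=h_j(t)$, i.e. $t h_j(t)-h_j(t-1)(t-\a)\cdot(\text{shift from }L_{1,j}) = h_j(t)$ — more precisely I would first deduce from $[L_{-1,0},L_{1,j}]=(2)L_{0,j}+(\text{central})$ and $[L_{1,0},L_{1,j}]=0$ that $h_j(t)=\lambda'_j(t-\a)$ for some scalar $\lambda'_j$, paralleling the rank-one Virasoro argument. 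Then $[L_{1,0},L_{1,j}]=0$ forces a compatibility, and the relation $[L_{1,j},L_{1,l}]=0$ together with $[L_{i,0},L_{0,j}]$ pins down $\lambda'_j=\lambda^{-j}\mu^j\cdot\lambda$ for a single $\mu\in\C^*$ (the $j=1$ value). Once $L_{1,j}$ is known for all $j$, the action of general $L_{i,j}$ follows from $L_{i,j}=\frac{1}{?}[L_{?,0},L_{1,j}]$-type identities, and one recovers exactly \eqref{L-action}. Hence $M\cong\Omega(\lambda,\mu,\a)$.

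For the simplicity statement, I would show: $\Omega(\lambda,\mu,\a)$ is simple $\iff$ $\a\neq 0$. For the ``if'' direction, take a nonzero submodule $N$ and pick $0\neq f(t)\in N$ of minimal degree; compute $L_{1,0}\cdot f(t)-\lambda f(t-?)\cdot(\cdots)$ — better, use two operators such as $L_{1,1}$ and $L_{1,0}$ (same shift $t\mapsto t-1$, prefactors $\mu(t-\a)$ versus $\lambda(t-\a)$ differ only by the constant $\mu/\lambda$), so differences of suitable operators cannot lower degree, but operators like $L_{1,0}$ and $L_{2,0}$ combine, à la \cite{TZ}, to produce from $f$ a nonzero polynomial of strictly smaller degree unless $f$ is constant; then using $\a\neq 0$, $L_{1,0}\cdot 1=\lambda(t-\a)\notin\C$, and iterating shows $N=\C[t]$. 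For the ``only if'' direction, when $\a=0$ the action is $L_{i,j}\cdot f(t)=\lambda^{i-j}\mu^j t f(t-i)$, so $t\C[t]$ is a proper nonzero submodule and $\Omega(\lambda,\mu,0)$ is not simple. The main obstacle is the degree-lowering argument in the ``if'' direction: unlike the Virasoro case there are now infinitely many operators of each shift, but since for fixed $i$ all $L_{i,j}$ have the same shift $t\mapsto t-i$ and differ only by the scalar factor $(\lambda^{-1}\mu)^j$, the submodule question reduces cleanly to the Virasoro subalgebra $\mathfrak V\otimes 1$, where the result of \cite{TZ} (simplicity of $\Omega(\lambda,\a)$ iff $\a\neq 0$) applies directly.
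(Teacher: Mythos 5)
Your overall strategy---restricting to $\mathfrak{V}\otimes 1$ to get the $\Omega(\lambda,\a)$ structure via \cite{LZ}, then determining $L_{i,j}\cdot 1$ and $C_j\cdot 1$ from bracket relations, and reducing the simplicity question to the Virasoro subalgebra---is the same as the paper's, and the simplicity part (including the submodule $t\C[t]$ when $\a=0$) is correct. However, your argument that the central elements act by zero does not work. Since $C_j$ commutes with $L_{0,0}$, one has $C_j\cdot f(t)=f(t)c_j(t)$ with $c_j(t)=C_j\cdot 1$, and the relation $[L_{i,0},C_j]\cdot 1=0$ then reads $\lambda^i(t-i\a)c_j(t-i)=\lambda^i(t-i\a)c_j(t)$; this only forces $c_j(t)$ to be a \emph{constant}, and a nonzero constant central charge is entirely consistent with centrality. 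To kill it you must invoke a bracket of two $L$'s in which the central term genuinely appears, e.g.\ $[L_{2,j},L_{-2,0}]\cdot 1=-4L_{0,j}\cdot 1+\frac{1}{2}C_j\cdot 1$: once the polynomials $F_{i,j}(t)=L_{i,j}\cdot 1$ are known, the left-hand side computes to $-4\lambda^{-j}\mu^{j}t$, which forces $c_j=0$. This is exactly how the paper closes that step.

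A second gap: the relations you cite to pin down the constants $\lambda'_j$ in $L_{1,j}\cdot 1=\lambda'_j(t-\a)$ carry no information. Applying $[L_{1,j},L_{1,l}]=0$ to $1$ gives $\lambda'_l(t-1-\a)\lambda'_j(t-\a)=\lambda'_j(t-1-\a)\lambda'_l(t-\a)$, an identity valid for \emph{all} values of the constants, so it cannot produce the multiplicative law $\lambda'_j=\lambda^{1-j}\mu^{j}$. You need an actual recursion in $j$: for instance, first get $L_{0,j}\cdot 1=\lambda^{-1}\lambda'_j t$ from $[L_{-1,0},L_{1,j}]\cdot 1=2L_{0,j}\cdot 1$ and then use $[L_{0,1},L_{1,j}]\cdot 1=L_{1,j+1}\cdot 1$ to obtain $\lambda'_{j+1}=\lambda^{-1}\lambda'_1\lambda'_j$; the paper instead runs the recursion on the diagonal elements via $[L_{\pm 1,\pm 1},L_{i,i}]\cdot 1=(i\mp 1)L_{i\pm 1,i\pm 1}\cdot 1$ to get $\mu_i=\mu^i$ and only afterwards recovers the general $F_{i,j}$ from $[L_{i,0},L_{0,j}]\cdot 1=-iL_{i,j}\cdot 1$. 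As written, your sketch asserts the conclusion without a relation that actually yields it.
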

\begin{proof} Viewed as the Virasoro-module, we have $M\cong \Omega(\lambda, \a)$  defined in \eqref{intro}, that is,
$$L_{i,0}\cdot f(t)=\lambda^i(t-i\a)f(t-i),\ \ \ \ C_0\cdot f(t)=0, \ \ \ \mbox{for}\ \ \ i\in\Z, $$
where $f(t)\in\C[t]$ and $\lambda\in\C^*,\, \a\in\C$. For $(i,j)\in\Z\times\Z^*$, now we consider the actions of $L_{i,j}$ (resp. $C_j$) on $M$. They are completely determined by the actions of $L_{i,j}$ and $C_j$ on $1\in M$. Explicitly, for any polynomial $f(t)\in \C[t]$, we have
\begin{equation*}
\aligned
&L_{i,j}\cdot f(t)=L_{i,j}\big(f(L_{0,0})(1)\big)=f(t-i)F_{i,j}(t),\\
&C_j\cdot f(t)=f(t)c_j(t),
\endaligned
\end{equation*}
where $F_{i,j}(t):=L_{i,j}\cdot1, c_j(t)=C_j\cdot1$. From $[L_{i,0},L_{i,i}]\cdot 1=0$, we know that \begin{equation}\label{sx}
(t-i\a)F_{i,i}(t-i)=(t-i\a-i)F_{i,i}(t),\ \ \ \ \forall i\in\Z.
\end{equation}
This shows that $t-i\a$ divides $F_{i,i}(t)$. These entail us to assume that $F_{i,i}(t)=(t-i\a)f_{i}(t)$ for some $f_i(t)\in \C[t]$. Inserting this into \eqref{sx} gives $f_{i}(t)=f_i(t-i)$, which in turn requires $f_{i}(t)=\mu_i$ for some $\mu_i\in\C$ with $\mu_0=1$. Thus,\begin{eqnarray*}
F_{i,i}(t)=(t-i\a)\mu_{i},\ \ \ \forall i\in\Z.
\end{eqnarray*}
Using
\begin{eqnarray*}
[L_{1,1},L_{i,i}]\cdot 1=(i-1)L_{i+1,i+1}\cdot 1\quad {\rm and}\quad [L_{-1,-1},L_{i,i}]\cdot 1=(i+1)L_{i-1,i-1}\cdot 1,
\end{eqnarray*}
we obtain
\begin{eqnarray*}
(i-1)\mu_{i+1}=(i-1)\mu_i\mu_1\quad {\rm and}\quad (i+1)\mu_{i-1}=(i+1)\mu_i\mu_{-1}.
\end{eqnarray*}
 It follows from the above recurrence relations that $\mu_i=\mu^i$ ($\mu:=\mu_1\in\C^*$) for all $i\in\Z$. Hence, $$F_{i,i}(t)=\mu^i(t-i\a),\ \ \ \ \forall i\in\Z.$$ By $
[L_{i,0},L_{0,i}]\cdot 1=(-i)L_{i,i}\cdot1$, we obtain
\begin{equation*}
\lambda^i(t-i\a)\big(F_{0,i}(t-i)-F_{0,i}(t)\big)=-i\mu^i(t-i\a).
\end{equation*}
It follows that
\begin{equation}\label{ws}
F_{0,i}(t)=\lambda^{-i}\mu^it+e_i \mbox{ \ for some \ } e_i\in\C, \ \ \ \ \forall i\in\Z.
\end{equation}
Combining this with  $[L_{i,0},L_{0,j}]\cdot 1=(-i)L_{i,j}\cdot1$ gives \begin{equation*}
F_{i,j}(t)=\lambda^{i-j}\mu^j(t-i\a),\ \ \ \ \forall i\in\Z^*.
\end{equation*}
From
\[
[L_{1,0},L_{-1,i}]\cdot 1=(-2)L_{0,i}\cdot1,
\]
we obtain $e_i=0$. Thus, \eqref{ws} is simply written as $F_{0,i}(t)=\lambda^{-i}\mu^it$  for any $i\in\Z$. Consequently,
\begin{equation*}
F_{i,j}(t)=\lambda^{i-j}\mu^j(t-i\a),\ \ \ \ \forall (i,j)\in\Z^2.
\end{equation*}
Hence,
$$L_{i,j}\cdot f(t)=\lambda^{i-j}\mu^j(t-i\a)f(t-i),\ \ \forall (i,j)\in\Z^2, f(t)\in\mathbb{C}[t].$$
Moreover, from the following equality
$$[L_{2,j},L_{-2,0}]\cdot 1=-4L_{0,j}\cdot 1+\frac{1}{2}C_j\cdot 1,\,\,\forall\,j\in\mathbb{Z},$$ we immediately get $c_j(t)=0$ for any $j\in\mathbb{Z}$. The above discussion implies that $M\cong\Omega(\lambda,\mu,\a)$.

Furthermore, if $M$ is a simple $\mathfrak{L}$-module, then it is obvious simple as a $\mathfrak{V}$-module. It follows directly from \cite{LZ} that $\alpha\neq 0$. On the other hand, if $\alpha=0$, then it is a routine to check that $t\Omega(\lambda,\mu,0)$ is an $\mathfrak{L}$-submodule of $\Omega(\lambda,\mu,0)$.
\end{proof}

\begin{rema}
For any $\lambda,\mu\in\mathbb{C}^*$, it follows from \cite{LZ, CG2} that $\Omega(\lambda, \mu, 0)$ has the following unique composition series
$$\Omega(\lambda, \mu, 0)\supset t\Omega(\lambda, \mu, 0)\supset 0,$$
where $\Omega(\lambda, \mu, 0)/ \big(t\Omega(\lambda, \mu, 0))$ is the one-dimensional trivial $\mathfrak{L}$-module, and $t\Omega(\lambda, \mu, 0)\cong
\Omega(\lambda, \mu, 1)$.
\end{rema}

The following result asserts the simplicity of $\mathfrak{L}$-modules by taking tensor products of some irreducible modules in Theorem \ref{theo123456} with irreducible highest weight modules or Whittaker modules.
\begin{theo} Let $m\in\N,\,\lambda_i,\mu_i,\a_i\in\C^*$ for $i=1,2,\ldots,m$ with the $\lambda_i$ pairwise distinct. Let $V$ be a highest weight module or Whittaker module over $\mathfrak{L}$. Then the tensor product $(\otimes_{i=1}^{m}\Omega(\lambda_i,\mu_i,\a_i))\otimes V$ is an irreducible $\mathfrak{L}$-module. Especially, $\otimes_{i=1}^{m}\Omega(\lambda_i,\mu_i,\a_i)$ is an irreducible $\mathfrak{L}$-module.
\end{theo}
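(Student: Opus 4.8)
The plan is to fix a nonzero $\mathfrak{L}$-submodule $W$ of
\[
T:=\Big(\bigotimes_{i=1}^{m}\Omega(\lambda_i,\mu_i,\a_i)\Big)\otimes V=\C[t_1,\dots,t_m]\otimes V
\]
and prove $W=T$. The only feature of $V$ used in the argument is the following local finiteness property: for every $v\in V$ there is $N(v)\in\N$ with $L_{n,j}\cdot v=0$ for all $n\ge N(v)$ and all $j\in\Z$. A highest weight module carries a $\Z_{\ge0}$-grading $V=\bigoplus_{k\ge0}V_{(k)}$ with $L_{n,j}V_{(k)}\subseteq V_{(k-n)}$, so $L_{n,j}$ annihilates $V_{(k)}$ for $n>k$ and every vector lies in a finite sum of the $V_{(k)}$, giving the property; for a Whittaker module it follows from a commutator estimate — writing $v=Yw_0$ with $w_0$ the Whittaker vector and $Y$ a product of finitely many elements of $\bigoplus_{i\le0}\mathfrak{L}_i$, one checks that for $n$ large every term of $[L_{n,j},Y]\cdot w_0$ carries a factor from $\bigoplus_{i\ge2}\mathfrak{L}_i$ adjacent to $w_0$ and hence vanishes, so $L_{n,j}v=YL_{n,j}w_0+[L_{n,j},Y]w_0=0$. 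The trivial module $V=\C$ has this property and the tensor product with it is $\bigotimes_i\Omega(\lambda_i,\mu_i,\a_i)$, so the ``Especially'' assertion is the special case $V=\C$ and it is enough to treat the general statement.

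The first and main step — which I expect to be the principal obstacle — is to show $W$ contains an element $1\otimes\cdots\otimes1\otimes v$ with $0\ne v\in V$. Pick any $0\ne w\in W$ and write $w=\sum_{\mathbf a}\mathbf t^{\mathbf a}\otimes v_{\mathbf a}$, a finite sum over multi-indices $\mathbf a=(a_1,\dots,a_m)$ with $\mathbf t^{\mathbf a}=t_1^{a_1}\cdots t_m^{a_m}$, and choose $N$ so large that $L_{n,j}v_{\mathbf a}=0$ for all $n\ge N$, all $j$, and all $\mathbf a$. For $n\ge N$ the $V$-slot contributes nothing, and a direct computation with the defining action gives
\[
L_{n,0}\cdot w=\sum_{i=1}^{m}\lambda_i^{\,n}\,P_i(\mathbf t,n),\qquad
P_i(\mathbf t,n)=\sum_{\mathbf a}(t_i-n\a_i)(t_i-n)^{a_i}\Big(\textstyle\prod_{l\ne i}t_l^{a_l}\Big)\otimes v_{\mathbf a},
\]
where each $P_i(\mathbf t,n)$ is polynomial in $n$. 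Since the $\lambda_i$ are pairwise distinct and nonzero, the functions $n\mapsto\lambda_i^{\,n}n^{s}$ $(1\le i\le m,\ s\ge0)$ are linearly independent, so every coefficient of $\lambda_i^{\,n}n^{s}$ above, being a $\C$-linear combination of finitely many $L_{n,0}\cdot w\in W$, lies in $W$. Taking $i=1$ and the top power $s=d_1+1$, where $d_1:=\max\{a_1:v_{\mathbf a}\ne0\}$, this coefficient equals
\[
w^{(1)}:=(-1)^{d_1+1}\a_1\sum_{\mathbf a:\,a_1=d_1}\Big(\textstyle\prod_{l\ne1}t_l^{a_l}\Big)\otimes v_{\mathbf a}\ \in\ W ,
\]
which is nonzero (here $\a_1\ne0$ is essential), involves no $t_1$, and still has its $V$-components among the $v_{\mathbf a}$. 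Repeating the extraction with $t_2$, then $t_3$, and so on produces after $m$ steps a nonzero element of $W$ free of all $t_i$, i.e.\ of the form $1\otimes\cdots\otimes1\otimes v$; the bookkeeping ensuring non-vanishing at each stage is the delicate part.

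Given $1\otimes\cdots\otimes1\otimes v\in W$ with $v\ne0$, I next show $\C[t_1,\dots,t_m]\otimes v\subseteq W$ by induction on total degree in the $t_i$. If every monomial of degree $<d$ tensored with $v$ lies in $W$, then applying $L_{n,0}$ with $n\ge N(v)$ (so $L_{n,0}v=0$) to a degree-$(d-1)$ monomial tensored with $v$ produces an element whose degree-$d$ part is $\big(\sum_i\lambda_i^{\,n}t_i\big)$ times that element and whose lower-degree part is already in $W$; letting $n$ run over $m$ consecutive integers $\ge N(v)$ and inverting the associated Vandermonde matrix in the $\lambda_i^{\,n}$ solves for every degree-$d$ monomial tensored with $v$, completing the induction.

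Finally, with $\C[\mathbf t]\otimes v\subseteq W$, for any $X\in\mathfrak{L}$ and $p\in\C[\mathbf t]$ the identity $X\cdot(p\otimes v)=\big(\sum_i p_i\big)\otimes v+p\otimes Xv$, where the $p_i\in\C[\mathbf t]$ come from the action on the tensor factors, has its first summand in $\C[\mathbf t]\otimes v\subseteq W$; hence $p\otimes Xv\in W$ for all $p$, i.e.\ $\C[\mathbf t]\otimes Xv\subseteq W$. As $V$ is irreducible, $V=U(\mathfrak{L})v$, so iterating gives $\C[\mathbf t]\otimes V\subseteq W$, whence $W=T$ and $T$ is irreducible.
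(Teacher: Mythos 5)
Your proof is correct and follows the same tensor-product argument that the paper invokes by citing Tan--Zhao \cite{TZ1} (the paper itself omits all details with ``the proof is similar to that of [Theorem 1, TZ1]''). The key steps you give --- uniform local annihilation of $V$ by $L_{n,j}$ for $n\gg0$, extraction of coefficients via linear independence of the functions $n\mapsto\lambda_i^{\,n}n^{s}$ for pairwise distinct nonzero $\lambda_i$ to reach a nonzero $1\otimes\cdots\otimes1\otimes v\in W$, the Vandermonde induction giving $\C[t_1,\dots,t_m]\otimes v\subseteq W$, and then sweeping out $V$ by cyclicity --- are exactly those of the cited proof, correctly adapted to the loop-Virasoro setting.
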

\begin{proof}
The proof is similar to that of \cite[Theorem 1]{TZ1}. We omit the details.
\end{proof}

\section{Modules over Block type Lie algebras}
For $a\in\N$, we denote $\delta_{a,\N}=1$ if $a\in\N$, and $0$ otherwise. For any positive integer $i$, we use $N_{\geq i}$ to denote the positive integer greater than or equal to $i$. For any nonzero complex number $q$, the Block type Lie algebra $\widehat{\mathfrak{B}(q)}$ has a basis $\{L_{m,i}, C\mid (m,i)\in\Z\times\Z_+\}$ over $\C$ subject to the following relations
\begin{equation*}
\aligned
&[L_{m,i},L_{n,j}]=\big(n(i+q)-m(j+q)\big)L_{m+n,i+j}+\delta_{m+n,0}\delta_{i+j,0}\frac{m^3-m}{12}C,\\
&[C,L_{m,i}]=0.
\endaligned
\end{equation*}
The Lie algebra $\widehat{\mathfrak{B}(q)}$ is in fact a subalgebra of some special case of generalized Block algebras studied in \cite{DZ}. It is also a half part of the Block type algebras studied in \cite{LG1} and $\widehat{\mathfrak{B}(1)}$ is the Block type Lie algebra considered in \cite{WT}. One sees that the center of $\widehat{\mathfrak{B}(q)}$ is $Z(\widehat{\mathfrak{B}(q)})=\C C\oplus\C\delta_{-q,\N}L_{0,-q}$. In this section, we concentrate on the algebra $$\mathfrak{B}(q):=[\widehat{\mathfrak{B}(q)},\widehat{\mathfrak{B}(q)}]$$ with basis $$\{L_{m,i},C\,|\,(m,i)\in\Z\times\Z_+\}.$$
Here and below, when $-2q\in\mathbb{N}$, we abuse the notation by writing $\Z\times \Z_+$ rather than $(\Z\times\Z_+)\setminus\{(0, -2q)\}$ (we will make sure that this abuse of notation will not create any confusion). The Lie algebra $\mathfrak{B(q)}$ is interesting in the sense that it contains the following subalgebra
\begin{equation*}{\rm span_{\C}}\{q^{-1}L_{m,0},C\mid\,m\in\Z\},\end{equation*}
which is isomorphic to the well-known Virasoro algebra. The Lie algebra  $\mathfrak{B(q)}$ is interesting to us in another aspect that it also contains many important subquotient algebras $\mathfrak{B}(q)_{k,l}$ for $l\geq k\geq0$, where
\begin{equation*}\mathfrak{B}(q)_{k,l}=\mathfrak{B}(q)_{k}/\mathfrak{B}(q)_{l+1},\ \ \ \mathfrak{B}(q)_{k}={\rm span_{\C}}\{L_{m,i}\mid m\in\Z,i\geq k\}.\end{equation*}
For instance, $\mathfrak{B}(q)_{0,0}$ is the (centerless) Virasoro algebra (thus the Virasoro algebra is both a subalgebra and a quotient algebra of $\mathfrak{B}(q)$), $\mathfrak{B}(-1)_{0,1}$ is the (centerless) twisted Heisenberg-Virasoro algebra. Note that the Cartan subalgebra of $\mathfrak{B}(q)$ is spanned by $L_{0,0}$. The representation theory of $\mathfrak{B}(q)$  has been extensively studied by many authors. For example, the authors in \cite{WT,SXX1,SXX2} presented a classification of the irreducible Harish-Chandra modules over $\mathfrak{B}(q)$. In \cite{CG1} and \cite{XZ}, the authors classified the unitary Harish-Chandra modules over $\mathfrak{B}(q)$. In this section, we will determine the $\mathfrak{B}(q)$-modules which are free of rank one when regarded as
$\C L_{0,0}$-modules.

If $q\neq-1$, the $\mathfrak{V}$-module (i.e.,$\mathfrak{B}(q)_{0,0}$-module) $\Omega(\lambda, \a)$  becomes a $\mathfrak{B}(q)$-module by setting $$\mathfrak{B}(q)_{1}\cdot \Omega(\lambda, \a)=0.$$ The resulting $\mathfrak{B}(q)$-module will  be also denoted by $\Omega(\lambda, \a)$ for brevity. Similarly, the twisted Heisenberg-Virasoro-module
(i.e., $\mathfrak{B}(-1)_{0,1}$-module) $\Omega(\lambda, \a,\beta)$ constructed in \cite{CG} can be extended to a $\mathfrak{B}(-1)$-module by setting $$\mathfrak{B}(-1)_{2}\cdot \Omega(\lambda, \a,\beta)=0.$$ The resulting $\mathfrak{B}(q)$-module will be also denoted by $\Omega(\lambda,\a,\b)$ for brevity. In a uniform way, the $\mathfrak{B}(q)$-module structure on $\Omega(\lambda, \a)$ and $\Omega(\lambda,\a,\b)$ is given by
\begin{equation}\label{aaa}
\aligned
\frac{}{}&L_{m,i}\cdot f(t)=\lambda^m\big(\delta_{i,0}(t-mq\a)+\delta_{q,-1}\delta_{i,1}\beta\big)f(t-mq),\\
&C\cdot f(t)=0,
\endaligned
\end{equation}where $f(t)\in\C[t], (m,i)\in\Z\times \Z_+$.
\begin{theo}\label{theo1234} Let $M$ be a $\mathfrak{B}(q)$-module such that it is free of rank one as a $U(\C L_{0,0})$-module.
Then there exist some $\lambda\in\C^*$ and $\a,\b\in\C$ such that
\begin{equation}\label{two classes}
M\cong\begin{cases}
\Omega(\lambda, \a),&\text{if}\,\, q\neq -1,\cr
\Omega(\lambda, \a,\b),&\text{if}\,\,q=-1.
\end{cases}
\end{equation}
Moreover, $M$ is simple if and only if $\a\neq 0$ or $\beta\neq 0$ in \eqref{two classes}.
\end{theo}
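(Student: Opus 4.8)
The plan is to identify $M$ with $\C[t]$ so that $L_{0,0}$ acts as multiplication by $t$, to determine the action of every $L_{m,i}$ and of $C$ on the cyclic vector $1$, and then to reassemble the module. Writing $F_{m,i}(t)=L_{m,i}\cdot 1$, the relation $[L_{0,0},L_{m,i}]=mq\,L_{m,i}$ forces $L_{m,i}\cdot f(t)=F_{m,i}(t)f(t-mq)$, and centrality of $C$ forces $C\cdot f(t)=(C\cdot1)f(t)$. Restricting $M$ to the Virasoro subalgebra $\mathrm{span}_{\C}\{q^{-1}L_{m,0}:m\in\Z\}\oplus\C C$, which is again free of rank one over its Cartan $\C L_{0,0}$, I would invoke the classification of rank-one $U(\mathfrak h)$-free modules for the Virasoro algebra (\cite{LZ}, cf.\ \cite{TZ}): after the rescaling $t\mapsto t/q$ it gives $\lambda\in\C^*$ and $\a\in\C$ with $F_{m,0}(t)=\lambda^m(t-mq\a)$ and $C\cdot1=0$. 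This pins down the action of all $L_{m,0}$ and of $C$.

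The heart of the matter is to compute $F_{0,i}$ for $i\ge1$ with $L_{0,i}\in\mathfrak B(q)$ (i.e.\ $i\ne-2q$); the remaining $F_{m,i}$ ($m\ne0$) then come for free, since for $i+q\ne0$ the relation $[L_{m,0},L_{0,i}]=-m(i+q)L_{m,i}$ applied to $1$ yields $F_{m,i}(t)=\lambda^m(t-mq\a)\bigl(F_{0,i}(t)-F_{0,i}(t-mq)\bigr)\big/\bigl(m(i+q)\bigr)$. Feeding this into $[L_{m,i},L_{-m,0}]=-m(i+2q)L_{0,i}$ and clearing the common linear factors produces a polynomial identity in $F_{0,i}$ valid for all $m\in\Z^*$; extracting the coefficient of $(mq)^2$ gives the Euler-type equation $t^2F_{0,i}''+2tF_{0,i}'=\bigl((i+q)(i+2q)/q^2\bigr)F_{0,i}$, whence $F_{0,i}$ is either $0$ or a single monomial $c_it^{d_i}$ with $d_i(d_i+1)=(i+q)(i+2q)/q^2$. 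Substituting the monomial form into $[L_{0,j},L_{1,i}]=(j+q)L_{1,i+j}$ then forces $d_{i+j}=d_i+d_j-1$ together with a rigid relation among the $c_i$, and matching all coefficients yields a numerical incompatibility; hence $F_{0,i}=0$ — except when $q=-1$ and $i=1$, where $i+q=0$ and instead $[L_{m,0},L_{0,1}]=0$ shows only that $F_{0,1}$ is a constant $\b$, the genuine extra parameter. The exceptional cases are then handled by hand: for $q\ne-1$ with $-q\in\N$, $L_{0,-q}$ is central so $F_{0,-q}$ is a constant, which is killed by $[L_{m,i},L_{-m,j}]=-mq\,L_{0,-q}$ with $i+j=-q\ge2$ (using that the smaller-index $F_{0,i},F_{0,j}$ already vanish); and for the elements $L_{m,-2q}$ with $m\ne0$, the relation $[L_{m,-2q},L_{-m,0}]=0$ forces $F_{m,-2q}=0$ by propagating the roots of the resulting functional equation. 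Putting everything together identifies $M$ with $\Omega(\lambda,\a)$ if $q\ne-1$ and with $\Omega(\lambda,\a,\b)$ if $q=-1$. I expect the monomial-exclusion step to be the main obstacle: a single commutator relation does not suffice to rule out the spurious monomial solutions that can arise for special rational $q$, so one has to play several relations off against each other and keep careful track of the low-order coefficients.

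For the simplicity statement, suppose first $\a\ne0$. The restriction of $M$ to the Virasoro subalgebra is, after rescaling $t\mapsto t/q$ (resp.\ $t\mapsto-t$ when $q=-1$), the Virasoro module $\Omega(\lambda,\a)$, which is simple by \cite{LZ}; hence any nonzero $\mathfrak B(q)$-submodule, being a nonzero submodule over this subalgebra, must equal $M$. If instead $q=-1$ and $\b\ne0$, then $L_{m,1}\cdot f(t)=\lambda^m\b\,f(t+m)$, so given a nonzero submodule $N$ and a nonzero $g\in N$ of minimal degree, $N$ contains $g(t+1)$ and hence $g(t+1)-g(t)$, of strictly smaller degree; therefore $\deg g=0$, and applying the $L_{m,0}$ then gives $N=M$. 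Conversely, if $\a=0$ and either $q\ne-1$ or $\b=0$, then by \eqref{aaa} the operators $L_{m,0}$ preserve $t\C[t]$ while all other $L_{m,i}$ annihilate it, so $t\C[t]$ is a proper nonzero submodule and $M$ is not simple. This shows $M$ is simple if and only if $\a\ne0$ or $\b\ne0$.
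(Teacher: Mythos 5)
Your overall architecture is sound and close to the paper's: restrict to the Virasoro subalgebra to fix $F_{m,0}(t)=\lambda^m(t-mq\a)$ and $C\cdot1=0$, express $F_{m,i}$ through $F_{0,i}$ via $[L_{m,0},L_{0,i}]$ (the paper's \eqref{w5}), and plug into $[L_{m,i},L_{-m,0}]=-m(i+2q)L_{0,i}$; extracting the coefficient of $m^2$ to get the Euler equation $t^2F_{0,i}''+2tF_{0,i}'=\frac{(i+q)(i+2q)}{q^2}F_{0,i}$ is in fact a clean shortcut to the monomial form $F_{0,i}=c_it^{d_i}$, $d_i(d_i+1)=(i/q+1)(i/q+2)$, which the paper only reaches through a coefficient induction, and your simplicity argument is fine (the paper just cites \cite{LZ, CG}). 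The genuine gap is the monomial-exclusion step, which you assert ("matching all coefficients yields a numerical incompatibility") and then explicitly concede is the unresolved main obstacle. This is exactly the hard core of the theorem. For $q=1/n$ the exponent bookkeeping is \emph{consistent}: $d_i=ni+1$ satisfies $d_{i+j}=d_i+d_j-1$, so no contradiction appears at the level of degrees, and the spurious solutions can only be killed by an actual coefficient comparison; the same issue arises for $q=-1$ with $i\ge3$ (there $d_i=i-2$ solves your Euler equation). The paper carries this out in Case 3, Subcase 2: with $H_{0,1}=a_Nt^N$ and $H_{0,2}=b_2t^{N_2}$ it computes $H_{1,2}$ in two ways, via \eqref{w5} and via $[L_{1,1},L_{0,1}]\cdot1=-(q+1)L_{1,2}\cdot1$, obtaining \eqref{key equal}; comparing the highest and lowest coefficients gives $N_2=2N-1$ and two numerical relations which, together with the known values of $N$ and $N_2$, force $a_N=0$. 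Nothing of this kind is performed in your proposal, so the nonzero monomials are not actually ruled out and the classification is not proved for $q\in\{\pm\frac1n\}$ (nor fully for $q=-1$).

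Two secondary omissions. For $q=-1$ you only conclude that $F_{0,1}$ is a constant $\b$; since $i+q=0$ your "comes for free" formula does not determine $F_{m,1}$ for $m\ne0$, yet $F_{m,1}(t)=\lambda^m\b$ is needed to identify $M$ with $\Omega(\lambda,\a,\b)$ (the paper imports this from the twisted Heisenberg--Virasoro classification in \cite{CG}; alternatively a short leading-coefficient argument using $[L_{m,1},L_{-m,0}]\cdot1=mL_{0,1}\cdot1=m\b$ does it). Likewise, for $-q\in\N$ with $q\ne-1$ you treat $F_{0,-q}$ but never the elements $F_{m,-q}$ with $m\ne0$ (again $i+q=0$); they can be killed by, say, $[L_{0,1},L_{m,-q-1}]=m(1+q)L_{m,-q}$ once the lower levels vanish, but this has to be said. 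These two points are easily repaired; the missing coefficient computation in the first paragraph is not, and as written the proposal does not yet establish \eqref{two classes}.
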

\begin{proof} Regarded as the Virasoro-module, we have $M\cong \Omega(\lambda, \a)=\C[t]$, that is, $$C\cdot f(t)=0,\ \ \ \ L_{m,0}\cdot f(t)=\lambda^m(t-mq\a)f(t-mq), \ \ \ \ \forall m\in\Z. $$
For $(m,i)\in\Z\times\N$, now we consider the actions of $L_{m,i}$ on $M$, which  are completely determined by the actions of $L_{m,i}$ on $1\in M$. Explicitly, for any polynomial $f(t)\in \C[t]$, we have
\begin{equation*}
L_{m,i}\cdot f(t)=f(t-mq)H_{m,i}(t),
\end{equation*}
where $H_{m,i}(t):=L_{m,i}\cdot1$.

\begin{case}$q=-\frac{1}{2}$. \end{case}

In this case, from $[L_{-m,0},L_{m,1}]\cdot1=0$, we obtain \begin{equation*}
(t-\frac{m}{2}\a)H_{m,1}(t-\frac{m}{2})=(t-\frac{m}{2}\a+\frac{m}{2})H_{m,1}(t),\ \ \ \ \forall m\in\Z^*,
\end{equation*}
which implies $H_{m,1}(t)=0$ for any $m\in\Z^*$. Combining  this with $[L_{0,i-1},L_{m,1}]\cdot1=m(i-\frac{3}{2})L_{m,i}\cdot1$,
we see that $H_{m,i}(t)=0$ for any $(m,i)\in\Z^*\times\N_{\geq3}$. Moreover, since
$[L_{-1,i},L_{1,0}]\cdot1=(i-1)L_{0,i}\cdot1$, it follows that $H_{0,i}(t)=0$ for any $i\in\N_{\geq 2}$. Now using
\begin{eqnarray*}
[L_{1,1},L_{m-1,1}]\cdot1=(\frac{m}{2}-1)L_{m,2}\cdot1\quad {\rm and}\quad [L_{-1,1},L_{m+1,1}]\cdot1=(\frac{m}{2}+1)L_{m,2}\cdot1,
\end{eqnarray*}we get $H_{m,2}(t)=0$ for any $m\in\Z$. Putting our observation together, we know that $M\cong\Omega(\lambda, \a)$.

\begin{case} $q=-1$. \end{case}

In this case, the subalgebra generated by $\{L_{m,0},L_{m,1}\mid m\in\Z\}$ is isomorphic to the twisted Heisenberg-Virasoro algebra. Following from \cite[Theorem 2]{CG}, we have
\begin{equation*}
H_{m,1}(t)=\lambda^m\beta,\ \ \ \ \forall m\in\Z,
\end{equation*}
for some $\beta\in\C$. By $[L_{m,2},L_{-m,0}]\cdot1=0$,
we get
\begin{equation*}
(t-m\a+m)H_{m,2}(t)=(t-m\a)H_{m,2}(t-m), \ \ \ \ \forall m\in\Z,
\end{equation*}
from which we see that \begin{equation}\label{w99}H_{m,2}(t)=0, \ \ \ \ m\in\Z^*.\end{equation} From
$[L_{m,i},L_{1,1}]=(i-1)L_{m+1,i+1},\, (m,i)\in\Z\times\N_{\geq2}$, we know that
\begin{equation}\label{bbn}
\lambda \beta\big(H_{m,i}(t)-H_{m,i}(t+1)\big)=(i-1)H_{m+1,i+1}(t).
\end{equation}
If $\beta=0$, then the above formula together with \eqref{w99} gives that $H_{m,i}(t)=0$ for any $(m,i)\in\Z\times\N_{\geq2}$.
If $\beta\neq0$, it follows from \eqref{w99} and \eqref{bbn} that $H_{m,i}(t)=0$ for any $(m,i)\in\Z\times\N_{\geq2}\setminus\{(i,i+2)\mid i\in\mathbb{N}\}$. Using $[L_{-1,2},L_{i+1,i}]\cdot 1=2iL_{i,i+2}\cdot 1$, we get $H_{i,i+2}(t)=0$ for any $i\in\mathbb{N}$.
Consequently, the subalgebra $\mathfrak{B}(q)_2$ vanishes on $M$. Then $M$ is isomorphic to $\Omega(\lambda, \a,\b)$.

\begin{case}$q\neq-\frac{1}{2},\,-1$. \end{case}

Before  the proof of this case, we present some formulae here, which will be used to do  calculations in this following. For any $(m,i)\in\Z\times\Z_+$, we have \begin{eqnarray*}
\!\!\!\!\!\!
m(i+q)L_{m,i}\cdot 1&\!\!\!=\!\!\!&[L_{0,i},L_{m,0}]\cdot 1\nonumber\\
&\!\!\!=\!\!\!&
L_{0,i}\cdot L_{m,0}\cdot 1-L_{m,0}\cdot L_{0,i}\cdot1\nonumber\\
&\!\!\!=\!\!\!&  L_{0,i}\cdot\big(\lambda^m(t-mq\a)\big)-L_{m,0}\cdot\big(H_{0,i}(t)\big),\end{eqnarray*}
which gives rise to
\begin{eqnarray}\label{w5}
\!\!\!\!\!\!
m(i+q)H_{m,i}(t)&\!\!\!=\!\!\!&\lambda^m(t-mq\a)\big(H_{0,i}(t)-H_{0,i}(t-mq)\big).\end{eqnarray}
Using this, we further have \begin{eqnarray*}
\!\!\!\!\!\!
m(i+2q)L_{0,i}\cdot 1&\!\!\!=\!\!\!&[L_{-m,0},L_{m,i}]\cdot 1\nonumber\\
&\!\!\!=\!\!\!&
L_{-m,0}\cdot L_{m,i}\cdot 1-L_{m,i}\cdot L_{-m,0}\cdot 1\nonumber\\
&\!\!\!=\!\!\!&
L_{-m,0}\cdot\big(\frac{\lambda^m(t-mq\a)(H_{0,i}(t)-
H_{0,i}(t-mq)}{m(i+q)}\big)\nonumber\\
&\!\!\!\!\!\!\!\!\!\!\!\!\!\!\!\!\!\!\!\!\!\!\!\!&
-L_{m,i}\cdot\big(\lambda^{-m}(t+mq\a)\big),\end{eqnarray*}
which shows that for any  $-q\neq i\in\Z_{+}$,
\begin{eqnarray}\label{w6v}
\!\!\!\!\!\!
m^2(i+q)(i+2q)H_{0,i}(t)&\!\!\!=\!\!\!&\big(t^2+mqt+m^2q^2\a(1-\a)\big)\big(H_{0,i}(t+mq)-H_{0,i}(t)\big)
\nonumber\\
&\!\!\!\!\!\!\!\!\!\!\!\!\!\!\!\!\!\!\!\!\!\!\!\!&
+
\big(t^2-mqt+m^2q^2\a(1-\a)\big)\big(H_{0,i}(t-mq)-H_{0,i}(t)\big).
\end{eqnarray}
Assume that
\begin{equation}\label{w87b}
H_{0,1}(t)=\sum_{k=0}^{N}a_{N-k}t^{N-k} \mbox { \ for some \ }a_{i}\in\C, 0\leq i\leq N.
\end{equation}
 Substituting this expression into \eqref{w6v} (for the case $i=1$) and comparing the coefficients of $t^{N}$ of both sides, we  obtain
\begin{equation}\label{w8b}
N(N+1)a_{N}=(1+\frac{1}{q})(2+\frac{1}{q})a_{N}.
\end{equation}

\begin{subcase} $|\frac{1}{q}|\not\in\N$. \end{subcase}

It follows from \eqref{w8b} that $a_N=0$, i.e., $H_{0,1}(t)=0$. Since
$[L_{m,i},L_{0,1}]\cdot 1=-m(1+q)L_{m,i+1}\cdot1$, it follows that $H_{m,i}(t)=0$ for any $(m,i)\in\Z^*\times\N$. While the identity $[L_{-1,i},L_{1,0}]\cdot 1=(i+2q)L_{0,i}\cdot1$
implies that $H_{0,i}(t)=0$ for $i\in\N$. That is to say that the subalgebra $\mathfrak{B}(q)_1$ vanishes on $M$. Then $M\cong\Omega(\lambda,\a)$.

\begin{subcase} $q\in\{\pm\frac{1}{n}\mid n\in\N\}\setminus \{-\frac{1}{2},-1\}$. \end{subcase}

We claim that $a_{N}=0$ in this subcase. Suppose on the contrary that $a_{N}\neq0$. One sees from  \eqref{w8b} that
\begin{equation*}
N=\left\{\begin{array}{llll} 1+\frac{1}{q},&\mbox{if  \ } q\in\{\frac{1}{n}\mid n\in\N\},\\[4pt]
-2-\frac{1}{q},&\mbox{if  \ }q\in\{-\frac{1}{n}\mid n\in\N_{\geq 3}\}.
\end{array}\right.
\end{equation*}
First we assert that $a_{N-k}=0$ for $k\geq1$ in \eqref{w87b}. The proof is given by induction on $k$.
We can write \begin{equation*}\label{ww2}
H_{0,1}(t)=a_{N}t^{N}+a_{N-1}t^{N-1}+\mbox{lower terms}.\end{equation*}
Substituting this into \eqref{w6v} (for the case $i=1$) and comparing the coefficient of $t^{N-1}$, we obtain $a_{N-1}=0$. Assume that the conclusion holds for $1\leq k \leq l-1$, that is,
\begin{equation*}
H_{0,1}(t)=a_{N}t^{N}+a_{N-l}t^{N-l}+a_{N-(l+1)}t^{N-(l+1)}+\mbox{lower terms}.\end{equation*} Also, by inserting this expression into \eqref{w6v} (for the case $i=1$)  and comparing the coefficient of $t^{N-l}$, we obtain
\begin{eqnarray}\label{w6vv}
\!\!\!\!\!\!
m^2(1+q)(1+2q)a_{N-l}&\!\!\!=\!\!\!&a_N\binom{N}{l+2}(mq)^{l+2}+a_{N-l}\binom{N-l}{2}(mq)^{2}
+a_{N-l-1}\binom{N-l-1}{1}mq\nonumber\\
&\!\!\!\!\!\!\!\!\!\!\!\!\!\!\!\!\!\!\!\!\!\!\!\!&
+mq\big(a_N\binom{N}{l+1}(mq)^{l+1}+a_{N-l}\binom{N-l}{1}mq\big)\nonumber\\
&\!\!\!\!\!\!\!\!\!\!\!\!\!\!\!\!\!\!\!\!\!\!\!\!&
+m^2q^2\a(1-\a)a_N\binom{N}{l}(mq)^{l}\nonumber\\
&\!\!\!\!\!\!\!\!\!\!\!\!\!\!\!\!\!\!\!\!\!\!\!\!&
+a_N\binom{N}{l+2}(-mq)^{l+2}+a_{N-l}\binom{N-l}{2}(-mq)^{2}
+a_{N-l-1}\binom{N-l-1}{1}(-mq)\nonumber\\
&\!\!\!\!\!\!\!\!\!\!\!\!\!\!\!\!\!\!\!\!\!\!\!\!&
-mq\big(a_N\binom{N}{l+1}(-mq)^{l+1}+a_{N-l}\binom{N-l}{1}(-mq)\big)\nonumber\\
&\!\!\!\!\!\!\!\!\!\!\!\!\!\!\!\!\!\!\!\!\!\!\!\!&
+m^2q^2\a(1-\a)a_N\binom{N}{l}(-mq)^{l}.
\end{eqnarray}
If $l$ is an odd number, we can simplify \eqref{w6vv} as
\begin{equation*}
m^2(1+q)(1+2q)a_{N-l}=2\bigg(\binom{N-l}{2}+\binom{N-l}{1}\bigg)(mq)^2a_{N-l}.
\end{equation*}
This together with $N(N+1)=(1+\frac{1}{q})(2+\frac{1}{q})$ indicates $a_{N-l}=0$. If $l$ is an even number, then rewrite \eqref{w6vv} as
\begin{equation*}
\big(N(N+1)-(N-l)(N-l+1)\big)a_{N-l}=2(mq)^l\bigg(\binom{N}{l+2}+\binom{N}{l+1}+\a(1-\a)\binom{N}{l}\bigg)a_{N}.
\end{equation*}
Since $m$ can be an arbitrary integer, it follows from the above formula that $a_{N-l}=0$, completing the induction step. Hence, we have
\begin{equation}\label{ccv}
H_{0,1}(t)=a_Nt^N.\end{equation}  Similarly, one has
\begin{equation}\label{ccv1}H_{0,i}(t)=b_it^{N_i}  \ \ \ \mbox{for}  \ \  b_i\in\C,\,\,\forall\,\,i\geq 2,\end{equation}
where\begin{equation*}
N_i=\left\{\begin{array}{llll} 1+\frac{i}{q}, &\mbox{if  \ }q\in\{\frac{1}{n}\mid n\in\N\},\\[4pt]
-2-\frac{i}{q}, &\mbox{if  \ }q\in\{-\frac{1}{n}\mid n\in\N_{\geq3}\}.
\end{array}\right.
\end{equation*}
\eqref{ccv1} in the case $i=2$ together with  \eqref{w5} gives rise to
\begin{equation}\label{ccv2}H_{1,2}(t)=\frac{\lambda b_2(t-q\a)\big(t^{N_2}-(t-q)^{N_2}\big)}{q+2}.\end{equation}
Now \eqref{ccv} along with \eqref{w5} forces
\begin{equation}\label{H11}
H_{1,1}(t)=\frac{\lambda a_N(t-q\a)\big(t^N-(t-q)^N\big)}{1+q}.
\end{equation}
Combining (\ref{ccv}), (\ref{H11}) with
$[L_{1,1},L_{0,1}]\cdot1=-(q+1)L_{1,2}\cdot1$,
we know that \begin{equation}\label{ccv44}
H_{1,2}(t)=\frac{\lambda a_N^2(t-q\a)\big(t^N-(t-q)^N\big)^2}{(1+q)^2}.
\end{equation}
By \eqref{ccv2} and \eqref{ccv44}, we have
\begin{equation}\label{key equal}
-(1+q)^2b_{2}\big(\sum_{i=1}^{N_2}\binom{N_2}{i}t^{N_2-i}(-q)^i\big)=(q+2)a_N^2\big(\sum_{j=1}^{N}\binom{N}{j}t^{N-j}(-q)^j\big)^2.
\end{equation}
By comparing the coefficients of the highest and lowest terms in both sides of (\ref{key equal}), we immediately get $N_2=2N-1$, and
\begin{eqnarray*}
(1+q)^2N_2qb_2&=&(q+2)(Nq)^2a_N^2,\\
-(-q)^{N_2}(1+q)^2b_2&=&(q+2)a_N^2q^{2N},
\end{eqnarray*}
which force $a_N=0$, a contradiction. Thus, the subalgebra $\mathfrak{B}(q)_1$ vanishes on $M$ by applying the same argument as in the Subcase $1$.
This completes the proof for the first statement. The second statement follows dirctly from \cite{LZ, CG}.
\end{proof}


\begin{thebibliography}{9999}

\bibitem{BM} P. Batra,  V. Mazorchuk, Blocks and modules for Whittaker pairs, {\it J. Pure Appl. Algebra} {\bf 215}, 1552--1568 (2011).

\bibitem{CTZ} Y.  Cai, H. Tan, K. Zhao, Module structure on $U(\mathfrak{h})$ for Kac-Moody algebras, {\it arXiv:1606.01891v2}.

\bibitem{CZ} Y. Cai, K. Zhao, Module structure on $U(\mathfrak{h})$ for basic Lie superalgebras, {\it Toyama Math. J}.  {\bf 37}, 55--72 (2015).

\bibitem{CH} H. Chen, J. Han, A class of non-weight modules over the Virasoro algebra, {\it arXiv:1712.01436v1}.



\bibitem{CG} H. Chen,  X. Guo, Non-weight Modules over the Heisenberg-Virasoro and $W(2,2)$ algebras, {\it J. Algebra Appl}. {\bf 16}, 1750097 (2017).

\bibitem{CG1} H. Chen,  X. Guo, Unitary Harish-Chandra modules over Block type Lie algebras $\mathfrak{B}(q)$, {\it J. Lie Theory} {\bf 23}, 827--836 (2013).

\bibitem{CG2} H. Chen,  X. Guo, A new family of modules over the Virasoro algebra, {\it J. Algebra} {\bf 457}, 73--105 (2016).

\bibitem{CC} Q. Chen,  Y. Cai, Modules over algebras related to the Virasoro algebra,  {\it Internat. J. Math}. {\bf 26}, 1550070 (2015).

\bibitem{DZ} D. Dokovic, K. Zhao, Derivations, isomorphisms and second cohomology of generalized Block algebras, {\it Algebra Colloq}. {\bf 3}, 245--272 (1996).

\bibitem{GLZ} X. Guo, R. L\"{u}, K. Zhao, Simple Harish-Chandra modules, intermediate series modules, and Verma modules over the loop-Virasoro algebra, {\it Forum Math}. {\bf 23}, 1029--1052 (2011).

\bibitem{HCS} J. Han, Q. Chen,  Y. Su, Modules over the algebras $\mathfrak{V}ir(a,b)$,  {\it Linear Algebra Appl}. {\bf 515}, 11--23 (2017).


\bibitem{K} V. Kac, Some problems of infinite-dimensional Lie aglebras and their representations, {\it Lecture Notes in Mathematics} {\bf 993}, 117--126 (1982).

\bibitem{KR} V. Kac, A. Raina, Bombay lectures on highest weight representations of infinite-eimensional Lie algebras, {\it Advanced Series in Mathematical Physics} {\bf 2}, World Scientific Publishing Co., Inc., Teaneck, NJ (1987).

\bibitem{LLZ} G. Liu,  R. L\"{u},  K.  Zhao, A class of simple weight Virasoro modules, {\it J. Algebra} {\bf 424}, 506--521 (2015).

\bibitem{LG} X. Liu,  X.  Guo, Whittaker  modules over loop-Virasoro algebra, {\it Front. Math. China} {\bf 8(2)}, 393--410 (2013).

\bibitem{LG1} X. Liu,  X.  Guo,  $U(\mathfrak{h})$-free modules over the Block algebra  $U(\mathfrak{B}(q))$, {\it arxiv:1801.03232v1}.

\bibitem{LGZ} R. L\"{u}, X. Guo, K. Zhao, Irreducible modules over the Virasoro algebra, {\it Doc. Math}.  {\bf 16}, 709--721 (2011).

\bibitem{LZ} R. L\"{u},  K.  Zhao, Irreducible Virasoro modules from irreducible Weyl modules, {\it J. Algebra} {\bf 414}, 271--287 (2014).

\bibitem{LZ2} R. L\"{u},  K.  Zhao, A family of simple weight Virasoro modules, {\it J. Algebra} {\bf 479}, 437--460 (2017).

\bibitem{M} O. Mathieu, Classification of Harish-Chandra modules over the Virasoro Lie algebra, {\it Invent. Math}. {\bf 107(2)}, 225--234 (1992).

\bibitem{MW}  V. Mazorchuk, E. Weisner, Simple Virasoro modules induced from codimension one subalgebras of the positive part, {\it  Proc. Amer. Math. Soc}. {\bf 142(11)}, 3695--3703 (2012).

\bibitem{MZ} V. Mazorchuk, K. Zhao, Simple Virasoro modules which are locally finite over a positive part, {\it Selecta Math. (N.S.)} {\bf 20(3)}, 839--854 (2014).

\bibitem{N} J. Nilsson, Simple $\mathfrak{sl_{n+1}}$-module structures on  $U(\mathfrak{h})$, {\it J. Algebra} {\bf 424}, 294--329 (2015).

\bibitem{N1} J. Nilsson, $U(\mathfrak{h})$-free modules and coherent families, {\it J. Pure Appl. Algebra}  {\bf 220},  1475--1488 (2016).

\bibitem{SXX1} Y. Su, C. Xia, Y. Xu, Classification of quasifinite representations of a Lie algebra related to Block type, {\it J. Algebra}  {\bf 393},  71--78 (2013).

\bibitem{SXX2} Y. Su, C. Xia, Y. Xu, Quasifinite representations of a class of Block type Lie algebras $\mathfrak{B}(q)$, {\it J. Pure Appl. Algebra}  {\bf 216},  923--934 (2012).

\bibitem{TZ} H. Tan,  K.  Zhao, $\mathfrak{W_n^+}$ and $\mathfrak{W_n}$-module structures on  $U(\mathfrak{h_n})$, {\it J. Algebra} {\bf 424}, 257--375 (2015).

\bibitem{TZ1} H. Tan, K. Zhao, Irreducible Virasoro modules from tensor products (II), {\it J. Algebra} {\bf 394}, 357--373 (2013).


\bibitem{WT} Q. Wang, S. Tan, Quasifinite modules of a Lie algebra related to Block type, {\it J. Pure Appl. Algebra} {\bf 211}, 596--608  (2007) .

\bibitem{XZ} C. Xia, R. Zhang, Unitary highest weight modules over Block type Lie algebras $\mathfrak{B}(q)$, {\it J. Lie Theory}  {\bf 23},  159--176 (2013).

\end{thebibliography}
\end{document}